\documentclass[journal,twoside,web]{ieeecolor}

\usepackage{generic}
\usepackage{cite}
\usepackage{amsmath,amssymb,amsfonts}
\usepackage{algorithmic}
\usepackage{graphicx}
\usepackage{textcomp}

\usepackage{graphicx}
\usepackage{subfigure}
\usepackage{epsfig} 
\usepackage{cite}
\usepackage{lcsys}
\usepackage{comment}

\newtheorem{theorem}{Theorem}[section]
\newtheorem{lemma}[theorem]{Lemma}

\newtheorem{corollary}[theorem]{Corollary}
\newtheorem{definition}[theorem]{Definition}

\newtheorem{assumption}[theorem]{Assumption}

\begin{document}

\def\BibTeX{{\rm B\kern-.05em{\sc i\kern-.025em b}\kern-.08em
    T\kern-.1667em\lower.7ex\hbox{E}\kern-.125emX}}
\markboth{\journalname, VOL. XX, NO. XX, XXXX 2017}
{Author \MakeLowercase{\textit{et al.}}: Preparation of Papers for IEEE Control Systems Letters (August 2022)}

\title{Finite-Time Optimization via Scaled Gradient-Momentum Flows}

\author{
Yu Zhou$^{a}$,
Mengmou Li$^{a}$, Masaaki Nagahara$^{a}$ 
\thanks{$^{a}$Graduate School of Advanced Science and Engineering, Hiroshima University, Japan; \texttt{yuzhou@hiroshima-u.ac.jp}, \texttt{mengmou@hiroshima-u.ac.jp}, \texttt{nagam@hiroshima-u.ac.jp}.}
\thanks{This work is supported by JST ASPIRE Project Grant Number JPMJAP2402.}
}

\maketitle
\thispagestyle{empty}

\begin{abstract}
In this paper, we develop a scaled gradient–momentum framework for continuous-time optimization that achieves global finite-time convergence. A state-dependent scaling mechanism is introduced to enable classical dynamics, such as Heavy-Ball-type and proportional–integral (PI)-type flows, to attain finite-time convergence. We establish explicit conditions that bridge the gradient-dominance property of the objective function and finite-time stability of the proposed scaled dynamics. Numerical experiments validate the theoretical results.
\end{abstract}

\begin{IEEEkeywords}
Continuous-time optimization, Gradient-momentum, Finite-time stability, 
\end{IEEEkeywords}

\section{Introduction}
\label{sec:introduction}

In recent years, continuous-time optimization has attracted increasing attention as a framework that connects optimization algorithms with dynamical systems and control theory. By modeling algorithms as differential equations, it enables systematic analysis and design using tools from nonlinear systems (see, e.g., \cite{su2016:JMLR, wilson2016lyapunov, kovachki2021:JMLR, shi2022:MP, franca2018:ICML}). It also opens new directions for optimization design via feedback control mechanisms (see, e.g., \cite{chen2024:NC, hauswirth2024:Arc}). 

In the continuous-time setting, finite-time stability has attracted increasing attention, as it guarantees convergence to the optimal equilibrium within a finite settling time \cite{bhat2000:Siam_JCO, diana2026:Aut, aal2025finite, ozaslan2024:CDC}. 
For first-order dynamics, various finite-time gradient flows have been proposed, including normalized and sign-based methods \cite{cortes2006:Aut}, as well as extensions via state-dependent scaling \cite{romero2020:ICML, ozaslan2024:CDC}. 
The normalized gradient can also exhibit improved escape behavior from saddle points \cite{murray2019:tac}. 
However, many practical optimization algorithms are inherently second-order, for which finite-time design remains less understood.

When a finite-time continuous optimization algorithm involves a second-order differential equation, many existing works formulate finite-time optimization as a control design problem, treating the gradient as an external input (see, e.g., \cite{aal2025finite}, \cite{texis2025:ECC}, \cite{rios2026:Siam}).
While this control-oriented formulation allows one to leverage established tools from nonlinear control theory, it can also complicate the design and analysis. In fact, optimization fundamentally differs from classical control problems in that it is not constrained by the intrinsic structural properties of a physical plant. Instead, the objective function itself determines the geometry of the induced dynamics. This structural distinction calls for design momentum-driven optimization dynamics, instead of a straightforward adaptation of classical control frameworks.

Finite-time stability is closely related to scaling behavior (see., e.g., \cite{bhat_etal_2005_MCSS}, \cite{denis2020:book}). 
In classical control design, scaling laws must respect the intrinsic 
dynamics and structural properties of the underlying plant. 
In contrast, optimization dynamics are algorithmically constructed rather than physically imposed, which provides significantly greater flexibility in modifying the vector field. 
This flexibility allows the incorporation of state-dependent scaling mechanisms that can be systematically designed to induce finite-time convergence.

Another important concern in finite-time optimization is that the scaling behavior 
may be intrinsically related to the objective function itself. 
However, this relationship has received relatively limited attention 
in the literature. In \cite{romero2020:ICML}, the authors investigate 
the scaling behavior of finite-time gradient flows in connection with 
gradient domination properties of the objective function, 
i.e., the growth rate of the gradient relative to the objective gap. 
When momentum is incorporated, however, the interaction between scaling mechanisms and gradient domination becomes more intricate.

These observations motivate our development of a scaled gradient–momentum flow that achieves finite-time stability. The main contributions of this paper are summarized as follows.

We propose a continuous-time optimization dynamics, called the scaled gradient–momentum flow, which systematically incorporates a state-dependent scaling operator into a second-order gradient–momentum differential equation. The proposed framework systematically scales classical continuous-time Heavy-Ball and PI-type dynamics, thereby achieving finite-time convergence.
Furthermore, we establish a rigorous characterization of the relation between the state-dependent scaling mechanism and the objective’s gradient-dominance property. By deriving explicit matching conditions between the scaling exponent and the gradient-dominance order, we establish sufficient conditions for global finite-time stability in terms of the objective’s local geometry.

The remainder of this paper is organized as follows. Section \ref{sec:preliminaries} introduces the preliminary mathematical tools on finite-time stability and gradient-based optimization. In Section \ref{sec:MainResults}, we detail the proposed scaled gradient-momentum flow, with main stability results. Finally, Section \ref{sec:simulation} provides numerical simulations to validate the theoretical findings.

\section{Preliminaries}
\label{sec:preliminaries}
 For a vector $x \in \mathbb{R}^n$, $\|x\|$ denotes the Euclidean norm.
For a symmetric positive definite matrix $P \in \mathbb{R}^{n \times n}$. The smallest and largest eigenvalues of $P$ are denoted by
$\lambda_{\min}(P)$ and $\lambda_{\max}(P)$, respectively.
The identity matrix of dimension $n$ is denoted by $I_n$.

Consider the nonlinear system
\begin{equation}\label{eq:sys}
    \dot{x} = \phi(x), \qquad x \in \mathbb{R}^n,
\end{equation}
where $\phi : \mathbb{R}^n \to \mathbb{R}^n$ is locally Lipschitz. A point $x_s \in \mathbb{R}^n$ is called an equilibrium (or stationary point) of \eqref{eq:sys} if $\phi(x_s) = 0$. Equivalently, if $x(0) = x_s$, then $x(t) = x_s$ for all $t \ge 0$. Since $x_s$ is constant, by the change of variables $z = x - x_s$, the equilibrium can be shifted to the origin. 
Hence, without loss of generality, we assume that the equilibrium is at $x=0$.

\begin{definition}[\hspace{1sp}\cite{bhat2000:Siam_JCO}]
 Let $\Phi \subset \mathbb{R}^n$ be a neighborhood of the origin. The origin of \eqref{eq:sys} is said to be finite-time stable, if it is Lyapunov stable, and for any initial condition $x_0 \in \Phi \setminus \{\boldsymbol{0}\}$, the trajectory reaches the origin in finite time $T(x_0) < \infty$, i.e., $\|x(t)\| = 0$ for all $t \geq T(x_0)$. 
\end{definition}

The origin is said to be globally finite-time stable if $\Phi = \mathbb{R}^n$.
In the context of continuous-time optimization, 
if the optimal solution is a finite-time stable equilibrium 
of the associated dynamical system, then the corresponding optimization dynamics is said to achieve finite-time convergence. 
In this case, we refer to the system as a 
finite-time continuous-time optimization method.

\begin{theorem}[\hspace{1sp}\cite{bhat2000:Siam_JCO}]\label{thm:finite_time_stability}
Suppose there exists a continuous, positive-definite function $V: \mathcal{D} \to \mathbb{R}$ that is $C^1$ on $\mathcal{D} \setminus \{\boldsymbol{0}\}$,
\[
\psi_1(\|x\|)\le V\le \psi_2(\|x\|)
\]
where $\mathcal{D} \subset \mathbb{R}^n$ is an open neighborhood of the origin, and $\psi_1, \psi_2\in \mathcal{K}_{\infty}$. If there exist constants $c > 0$ and $a \in (0,1)$ such that the time derivative of $V$ along the trajectories of the system satisfies:
\begin{equation}\label{eq:ft_condition}
\dot{V}(x) + c (V (x) )^a\leq 0, \quad \forall x \in \mathcal{D} \setminus \{\boldsymbol{0}\},
\end{equation}
then, the origin is a finite-time stable equilibrium. Moreover, for any initial condition $x_0 \in \mathcal{D}$, the settling time $T(x_0)$ required to reach the origin satisfies: $T(x_0) \leq \frac{V(x_0)^{1-a}}{c(1-a)}$.
\end{theorem}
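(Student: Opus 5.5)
The plan is the classical comparison argument of Bhat and Bernstein. It has three parts: Lyapunov stability, a finite-time bound for the scalar function $v(t)=V(x(t))$, and the settling-time estimate. Throughout, $x(t)$ is a solution with $x(0)=x_0\in\mathcal{D}$. For a global statement one takes $\mathcal{D}=\mathbb{R}^n$; otherwise one first restricts to a sublevel set $\Omega_r=\{x:V(x)\le r\}$ that is contained in $\mathcal{D}$.

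\textbf{Lyapunov stability and invariance.} Condition \eqref{eq:ft_condition} gives $\dot V\le 0$ on $\mathcal{D}\setminus\{\boldsymbol{0}\}$. Hence $V$ is nonincreasing along trajectories as long as they stay in $\mathcal{D}$. Fix $\varepsilon>0$ small enough that the ball $B_\varepsilon$ lies in $\mathcal{D}$, and choose $\delta$ with $\psi_2(\delta)<\psi_1(\varepsilon)$. If $\|x_0\|<\delta$, then
\[
\psi_1(\|x(t)\|)\le V(x(t))\le V(x_0)\le\psi_2(\delta)<\psi_1(\varepsilon).
\]
So the trajectory never leaves $B_\varepsilon$, which gives Lyapunov stability. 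The same sandwich shows that sublevel sets of $V$ are compact and forward invariant, so solutions exist for all $t\ge0$. The condition $\psi_1\in\mathcal{K}_\infty$ makes these sets bounded.

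\textbf{Comparison for $v$.} Suppose $x(t)\neq\boldsymbol{0}$ on an interval $[0,\tau)$. There $V$ is $C^1$, so $v$ is differentiable and $\dot v\le -c\,v^{a}$ with $v>0$. The key step is to integrate this through the substitution $w=v^{1-a}$:
\[
\dot w=(1-a)v^{-a}\dot v\le -c(1-a).
\]
Hence
\[
v(t)^{1-a}\le V(x_0)^{1-a}-c(1-a)t, \qquad t\in[0,\tau).
\]
The right-hand side is strictly positive only for $t<T^*:=V(x_0)^{1-a}/(c(1-a))$, while the left-hand side is positive whenever $x(t)\neq\boldsymbol{0}$. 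Therefore the trajectory cannot avoid the origin past $T^*$. There is a first time $T(x_0)\le T^*$ with $x(T(x_0))=\boldsymbol{0}$, using continuity of $v$ and positive definiteness of $V$.

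\textbf{Staying at the origin.} It remains to show $x(t)=\boldsymbol{0}$ for all $t\ge T(x_0)$. Suppose the trajectory left the origin after some time $t_1\ge T(x_0)$. Then $v$ would become positive on some interval $(t_1,t_2)$ with $v(t_1)=0$. On that interval $\dot v\le -c\,v^a<0$, so $v$ is decreasing there and $v(t)\le v(t_1)=0$, a contradiction. This step needs a little care, since $V$ is only continuous at $\boldsymbol{0}$: use the mean value theorem on $[t_1+\eta,t]$ and let $\eta\to0$.

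\textbf{Main obstacle.} I expect the main difficulty to be the non-Lipschitz behaviour of the vector field at the origin, which makes uniqueness of solutions delicate. The argument above avoids any uniqueness assumption because it bounds every solution individually. The remaining steps are routine.
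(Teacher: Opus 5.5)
Your proposal is correct and is essentially the standard Bhat--Bernstein argument that the paper relies on; the paper gives no proof of its own, only the citation, and your explicit integration of $w=v^{1-a}$ plays the role of the scalar comparison step. Your restriction to $\mathcal{D}=\mathbb{R}^n$, or to initial conditions in a compact forward-invariant sublevel set inside $\mathcal{D}$, is genuinely needed, because the settling-time bound for an arbitrary $x_0\in\mathcal{D}$, as literally stated, can fail for trajectories that leave $\mathcal{D}$.
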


We now recall several standard structural conditions used in the convergence analysis of gradient-based methods.

\begin{definition}
A differentiable function $f : \mathbb{R}^n \to \mathbb{R}$ 
is said to satisfy the \emph{Polyak--\L{}ojasiewicz (PL) inequality} 
if there exists $\mu>0$ such that $\frac{1}{2}\|\nabla f(x)\|^2 
\ge 
\mu \bigl(f(x)-f^\star\bigr)$, $\forall x \in \mathbb{R}^n$,
where $f^\star := \inf_{x \in \mathbb{R}^n} f(x)$.
\end{definition}

\begin{definition}[Strong Convexity]
A continuously differentiable function 
$f : \mathbb{R}^n \to \mathbb{R}$ 
is said to be $\mu$-strongly convex if there exists $\mu>0$ such that
\[
f(y) 
\ge 
f(x) 
+ \langle \nabla f(x), y - x \rangle
+ \frac{\mu}{2}\|y - x\|^2,
\quad \forall x,y \in \mathbb{R}^n.
\]
\end{definition}

To study scaling effects and more general growth behavior, 
we introduce the following notion.

\begin{definition} [\hspace{1sp}\cite{wibisono2016:PANS}]
Let $f : \mathbb{R}^n \to \mathbb{R}$ be continuously differentiable and 
let $x^\star$ be a local minimizer with $f^\star=f(x^\star)$. 
The function $f$ is said to be \emph{$\mu$-gradient dominated of order $p\in(1,\infty)$} 
in a neighborhood $\mathcal{D}$ of $x^\star$ if there exists $\mu>0$ such that
\begin{equation}\label{eq:grad_domination}
\tfrac{p-1}{p}\|\nabla f(x)\|^{\frac{p}{p-1}}
\ge
\mu^{\frac{1}{p-1}} \bigl(f(x)-f^\star\bigr),
\quad \forall x \in \mathcal{D}.
\end{equation}
If \eqref{eq:grad_domination} holds for all $x\in\mathbb{R}^n$, 
then $f$ is said to be \emph{globally $\mu$-gradient dominated of order $p$}.
\end{definition}

The notion of $\mu$-gradient domination generalizes several standard 
conditions in optimization.

\begin{itemize}
    \item When $p=2$, condition \eqref{eq:grad_domination} reduces to
    the PL inequality.

    \item If $f$ is $\mu$-strongly convex, then it satisfies the PL 
    inequality with the same constant $\mu$. Hence, strong convexity 
    implies gradient domination with $p=2$.

    \item For general $p>1$, $\mu$-gradient domination allows growth rates 
    different from quadratic and does not require convexity. 
    Therefore, it generalizes the PL condition.
\end{itemize}

We conclude this section with a local power inequality 
that will be repeatedly used in subsequent stability analysis.

\begin{lemma}\label{lem:local_power_bound}
Let $a \ge 1$. For any $\delta > 0$, there exists a constant $C>0$ such that for all
$x,y \in [0,\delta]$, the following inequality holds:
$(x+y)^a \le C \left(x^a + y\right)$.
\end{lemma}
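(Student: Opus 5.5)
We would prove the statement by first reducing $(x+y)^a$ to a combination of $x^a$ and $y^a$, and then using the boundedness of $y$ on $[0,\delta]$ to replace $y^a$ by a constant multiple of $y$. The only analytic input is the convexity of $t\mapsto t^a$ on $[0,\infty)$ for $a\ge 1$.

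\emph{Step 1: an elementary splitting bound.} For $x,y\ge 0$ and $a\ge 1$, convexity of $t\mapsto t^a$ applied to the midpoint of $2x$ and $2y$ gives
\[
(x+y)^a = \Bigl(\tfrac{1}{2}(2x)+\tfrac{1}{2}(2y)\Bigr)^a \le \tfrac{1}{2}(2x)^a+\tfrac{1}{2}(2y)^a = 2^{a-1}\bigl(x^a+y^a\bigr).
\]
Equivalently, one can use the cruder bound $(x+y)^a\le (2\max\{x,y\})^a\le 2^a(x^a+y^a)$. Either bound suffices, since any fixed constant is acceptable here.

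\emph{Step 2: absorbing $y^a$ into $y$.} For $y\in[0,\delta]$ we write $y^a = y\cdot y^{a-1}$. Since $a-1\ge 0$, we have $y^{a-1}\le \delta^{a-1}$, and therefore $y^a\le \delta^{a-1}y$. When $a=1$ this should be read with the convention $y^0=1$, which is consistent because the bound then holds trivially with constant $1$.

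\emph{Step 3: choosing the constant.} Combining the two steps yields
\[
(x+y)^a \le 2^{a-1}x^a + 2^{a-1}\delta^{a-1}y .
\]
Setting $C:=2^{a-1}\max\{1,\delta^{a-1}\}$ then gives $(x+y)^a\le C(x^a+y)$ for all $x,y\in[0,\delta]$, as required.

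The asymmetry of the statement, with $x^a$ retained but $y$ appearing only linearly, is exactly what Step 2 handles. The only subtle point is therefore that $C$ must depend on $\delta$: the inequality cannot hold globally when $a>1$, because $y^a$ eventually dominates $y$. The main obstacle is nothing more than this bookkeeping of the $\delta$-dependence, together with the degenerate case $a=1$, where the inequality holds with equality for $C=1$. Note that the restriction $x\le\delta$ is not actually needed, since $x^a$ is kept intact throughout.
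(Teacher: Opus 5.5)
Your proof is correct and matches the paper's argument essentially line for line: the convexity bound $(x+y)^a \le 2^{a-1}(x^a+y^a)$, then $y^a\le \delta^{a-1}y$ for $y\in[0,\delta]$, giving the same constant $C=2^{a-1}\max\{1,\delta^{a-1}\}$. Your side remark that the restriction $x\le\delta$ is never used is also correct.
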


\begin{proof}
Let $x,y \in [0,\delta]$ with $\delta>0$ fixed. 
Since the function $q \mapsto q^a$ is convex on $\mathbb{R}_+$ for $a \ge 1$, we have
\begin{equation}
(x+y)^a \le 2^{a-1}\left(x^a + y^a\right).
\end{equation}
Moreover, for $y \in [0,\delta]$ and $a \ge 1$, we have $y^a = y^{a-1} y \le \delta^{a-1} y$.
Therefore, $x^a + y^a \le x^a + \delta^{a-1} y$.
Combining the above inequalities yields
\[
(x+y)^a 
\le 2^{a-1} \left(x^a + \delta^{a-1} y\right)
\le 2^{a-1} \max\{1,\delta^{a-1}\} (x^a + y).
\]
Thus the desired inequality holds with $C = 2^{a-1} \max\{1,\delta^{a-1}\}$.
\end{proof}

 .

\section{Main results}
\label{sec:MainResults}
We aim to solve the following unconstrained optimization problem:
\[
\min_{\theta\in \mathbb{R}^n} f(\theta),
\]
where $f: \mathbb{R}^n \to \mathbb{R}$ is a smooth and differentiable objective function.
Let $\theta^*$ denote the optimal solution and $f^*$ the optimal value.

\begin{assumption}\label{assump:smooth}
The function $f:\mathbb{R}^n \to \mathbb{R}$ is twice continuously differentiable
and has $L$-Lipschitz continuous gradient, i.e., $\|\nabla f(x) - \nabla f(y)\| \le L \|x - y\|$, $\forall x,y \in \mathbb{R}^n$.
Equivalently, the Hessian $\nabla^2 f(x)$ exists and is uniformly bounded as $\|\nabla^2 f(x)\| \le L$, $\forall x \in \mathbb{R}^n$.
\end{assumption}

The objective is to design a continuous-time dynamical system whose trajectories converge to the minimizer $\theta^*$, and to characterize the convergence rate under appropriate structural conditions on $f$.

\subsection{Scaled gradient-momentum flow}
We propose the so-called scaled gradient-momentum flow governed by the following  dynamical system:
\begin{equation}\label{eq:dyn_HomM}
    \begin{aligned}
        \dot{\theta} &=  \|z\|^{\alpha}\left(-(1-\beta)\nabla f(\theta)  +\beta v\right),\\
        \dot{v} &= -\kappa\|z\|^{\alpha} \left(\gamma\nabla f(\theta) + (1-\gamma) v\right),
    \end{aligned} 
\end{equation}
where $v \in \mathbb{R}^n$, 
$z  = [(\nabla f(\theta
))^\top, v^\top]^\top \in \mathbb{R}^2$, $\alpha \in \mathbb{R}$, $\beta\in (0,1]$, $\gamma\in (0,1]$.
The parameter $\kappa > 0$ acts as a time-scaling factor for momentum dynamics.

The exponent $\alpha$ introduces a state-dependent scaling mechanism through the weighted norm $\|z\|^\alpha$.
In particular, when $\alpha = 0$, system~\eqref{eq:dyn_HomM} reduces to a classical gradient--momentum structure. To interpret this property formally, recall that a dynamical system with input
\[
\dot{x} = \phi(x,u)
\]
is said to be homogeneous of degree $d \in \mathbb{R}$ with respect to the standard dilation
$\delta_x = \lambda x$, $\delta_u=\lambda u$
if
\[
\phi(\lambda x, \lambda u) = \lambda^{d+1}\phi(x,u),
\quad \forall \lambda > 0.
\]
By interpreting the gradient $\nabla f(\theta)$ as an input-like signal and examining 
the scaling structure of the term $\|z\|^\alpha z$, 
we observe that under the dilation $(\theta,v) \mapsto (\lambda \theta, \lambda v)$,
the vector field of \eqref{eq:dyn_HomM} scales as $\lambda^{1+\alpha}$.
Hence, the system is homogeneous of degree $d=\alpha$.
In control system design, an asymptotically stable homogeneous system with negative degree is finite-time stable  (see, e.g., \cite{bhat_etal_2005_MCSS},\cite{grune2000SIAM}). However, since the control input is given by the gradient, stability inherently depends on the properties of the objective function. This paper aims to investigate how the scaling parameter (i.e., $\alpha$) and the growth behavior of the objective function (characterized by the gradient-dominance order) jointly determine finite-time stability.

The pair $(\beta,\gamma)$ determines the structure of the gradient-momentum flow.
The parameter $\beta$ determines the relative contribution of the gradient and the velocity in the position update, and the parameter $\gamma$ regulates the alignment of the velocity with the negative gradient.
\begin{itemize}
    \item If $\beta = 1$, it can be interpreted as a state-scaled heavy-ball-type system
\begin{equation}\label{eq:scaled_HB}
        \begin{aligned}
            \dot{\theta} = \|z\|^\alpha v,\
            \dot{v} = - \|z\|^\alpha 
            \big( \gamma \nabla f(\theta) + (1-\gamma) v \big),
        \end{aligned}
    \end{equation}

      \item If $\gamma = 1$, it has a proportional–integral (PI)-type structure in which the velocity integrates gradient information without explicit damping
    \begin{equation}  \dot{\theta} = \|z\|^\alpha(- (1\!-\!\beta)\nabla f(\theta) +\beta v),\ \dot{v} = - \|z\|^\alpha \nabla f(\theta) .  \end{equation}
\end{itemize}

Therefore, the scaled gradient-momentum flow \eqref{eq:dyn_HomM} provides a unified continuous-time framework that interpolates between heavy-ball dynamics, and PI-type structures. The state-dependent scaling term $\|z\|^\alpha$ enables the incorporation of homogeneity-based finite-time convergence mechanisms within a momentum-accelerated optimization architecture.

If $\gamma = 1$ and $\beta = 1$, the system reduces to
\begin{equation}
\begin{aligned}
\dot{\theta} = \|z\|^{\alpha} v,\
\dot{v} = - \|z\|^{\alpha}\nabla f(\theta).
\end{aligned}
\end{equation} 
When $\alpha = 0$, the dynamics become
\[
\dot{\theta} = v, 
\qquad 
\dot{v} = -\kappa \nabla f(\theta),
\]
which admit the energy function $H(\theta,v) = \frac{1}{2}\|v\|^2 + \kappa f(\theta)$.
A direct computation yields $\dot H=v^\top \dot v+\kappa\nabla f(\theta)^\top \dot{\theta}=0$, showing that the energy is conserved along trajectories. If the initial condition satisfies 
$H(\theta(0),v(0)) = h > 0$, 
then $H(\theta(t),v(t)) = h$ for all $t \ge 0$. 
Hence, the trajectory remains on the invariant level set 
$\{(\theta,v): H(\theta,v)=h\}$ 
and cannot converge to the equilibrium point 
$[(\theta^\star)^\top,\boldsymbol{0}^\top]^\top$, where $H=0$. Therefore, the equilibrium is not asymptotically stable. 
The system is purely conservative and admits no dissipation mechanism. 
Consequently, we restrict our analysis to the case $\beta\gamma < 1$, 
where dissipative effects are present.

\subsection{Stability analysis}
In this section, we investigate the convergence properties of the proposed scaled gradient--momentum flow.
In this section, we analyze the convergence properties of the proposed scaled gradient--momentum flow.
We first establish global asymptotic stability under a mild condition on the objective function.

\begin{lemma}\label{lem:asy_stability}
    Assuming $\theta^\star$ is the unique stationary point of $f(\theta)$ such that $\nabla f(\theta) = \boldsymbol{0}$ if and only if $\theta = \theta^\star$. Then, for $ \beta\gamma<1$, the equilibrium $ [(\theta^\star)^\top, \boldsymbol{0}^\top]^\top$ of \eqref{eq:dyn_HomM} is globally asymptotically stable.
\end{lemma}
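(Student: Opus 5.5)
The plan is a Lyapunov argument with an energy-type function, chosen so that the cross terms between $\nabla f(\theta)$ and $v$ cancel, followed by LaSalle's invariance principle. Write $g=\nabla f(\theta)$ and $s=\|z\|^{\alpha}$, so $s>0$ whenever $z\neq\boldsymbol{0}$. I would take
\[
V(\theta,v)=\kappa\gamma\bigl(f(\theta)-f^\star\bigr)+\tfrac{\beta}{2}\|v\|^2 .
\]
Along \eqref{eq:dyn_HomM} a direct computation gives
\[
\dot V = s\Bigl[-\kappa\gamma(1-\beta)\|g\|^2+(\kappa\gamma\beta-\beta\kappa\gamma)\,g^\top v-\beta\kappa(1-\gamma)\|v\|^2\Bigr]
=-\kappa s\bigl[\gamma(1-\beta)\|g\|^2+\beta(1-\gamma)\|v\|^2\bigr]\le 0 .
\]
The weights $\kappa\gamma$ and $\beta$ were chosen exactly so that the cross term vanishes. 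Since $\theta^\star$ is the unique stationary point, it is the global minimizer, so $f-f^\star>0$ for $\theta\ne\theta^\star$. Hence $V$ is positive definite about $[(\theta^\star)^\top,\boldsymbol{0}^\top]^\top$, and this gives Lyapunov stability.

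For attractivity I would apply LaSalle on the sublevel sets of $V$. The condition $\beta\gamma<1$ means at least one of $1-\beta$ and $1-\gamma$ is positive, which gives three cases for the largest invariant set inside $\{\dot V=0\}$.
\begin{itemize}
\item If both are positive, $\dot V=0$ forces $g=\boldsymbol{0}$ and $v=\boldsymbol{0}$ directly, i.e.\ the equilibrium.
\item If $\beta=1$ and $\gamma<1$, then $\dot V=0$ forces $v\equiv\boldsymbol{0}$ on the invariant set, hence $\dot v\equiv\boldsymbol{0}$, i.e.\ $\kappa s\gamma g=\boldsymbol{0}$. If $z\neq\boldsymbol{0}$ then $s>0$, which forces $g=\boldsymbol{0}$ and so $z=\boldsymbol{0}$, a contradiction. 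So $z=\boldsymbol{0}$ and $\theta=\theta^\star$.
\item If $\gamma=1$ and $\beta<1$, then $g\equiv\boldsymbol{0}$, so $\theta\equiv\theta^\star$ by uniqueness of the stationary point. Then $\dot\theta=s\beta v=\boldsymbol{0}$, and the same positivity argument for $s$ gives $v=\boldsymbol{0}$.
\end{itemize}
In all cases the invariant set is the equilibrium alone.

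Two technical points need care, and I expect them to be the main obstacle. The first is that for $\alpha<0$ the vector field is not locally Lipschitz, possibly not even continuous, at $z=\boldsymbol{0}$, so standard LaSalle does not apply verbatim. I would handle this by a time reparametrization $d\tau=s\,dt$ on $\{z\neq\boldsymbol{0}\}$. There the orbits of \eqref{eq:dyn_HomM} coincide with those of the unscaled ($\alpha=0$) system, which is locally Lipschitz under Assumption~\ref{assump:smooth}. The set $\{z=\boldsymbol{0}\}$ is exactly the equilibrium, so attractivity and stability for the $\alpha=0$ flow transfer to the scaled flow; only the time parametrization changes, and $V$ is nonincreasing in either clock.

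The second point is globality, which needs sublevel sets of $V$ to be bounded, i.e.\ $f$ radially unbounded. Uniqueness of the stationary point alone does not quite guarantee this. I would either show that trajectories remain bounded from $V(\theta(t),v(t))\le V(\theta(0),v(0))$, using coercivity of $f$, or add the standing requirement that the sublevel sets of $f$ are bounded; that requirement holds under the gradient-dominance settings used later. With bounded trajectories, LaSalle in the $\tau$-time yields convergence to the equilibrium from every initial condition, which establishes global asymptotic stability.
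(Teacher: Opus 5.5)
Your argument is the paper's proof. Your $V$ is exactly $\kappa\gamma$ times the paper's $V=f(\theta)-f^\star+\tfrac{\beta}{2\gamma\kappa}\|v\|^2$, the cross terms cancel in the same way, and your three-case LaSalle analysis ($\beta,\gamma<1$; $\beta=1$; $\gamma=1$) matches the paper's case analysis. Your two technical remarks add rigor the paper omits: the reparametrization $d\tau=\|z\|^{\alpha}dt$ deals with the non-Lipschitz field at $z=0$, which the paper ignores when it invokes LaSalle, and the paper's global conclusion likewise needs bounded sublevel sets of $f$, so the coercivity hypothesis you add is a genuine requirement of this argument and holds in the gradient-dominated setting where the lemma is used later. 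One small correction: $f-f^\star>0$ off $\theta^\star$ follows from $\theta^\star$ being the assumed optimal solution together with uniqueness of the stationary point. Uniqueness of the stationary point alone does not make it a global minimizer in dimension $n\ge 2$.
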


\begin{proof}
  
Let us consider the Lyapunov candidate as follows
\begin{equation}\label{eq:V}
    V= f(\theta)-f^\star + \tfrac{\beta}{2 \gamma \kappa} v^\top v.
\end{equation}
Then, the derivative of $V$ along the dynamic \eqref{eq:dyn_HomM} has
\begin{equation}
    \begin{aligned}
        \dot{V} =& \ \nabla f_{\theta}^\top\cdot(-(1-\beta)\|z\|^{\alpha}\nabla f_{\theta} +\beta \|z\|^{\alpha}v)\\
        &\ + \tfrac{\beta}{\gamma}v^\top (-\gamma \|z\|^{\alpha}\nabla f_{\theta} - (1-\gamma) \|z\|^{\alpha}v)\\
        =&\  -\|z\|^{\alpha}(1-\beta) \nabla f_{\theta}^\top  \nabla f_{\theta} -\|z\|^{\alpha} \tfrac{\beta(1-\gamma)}{\gamma}v^\top  v,
    \end{aligned}
\end{equation}
where $\nabla f_\theta:= \nabla f(\theta)$.
Hence, for positive $\beta\in (0,1)$ and $\gamma\in (0,1)$,  the equilibrium $ [(\theta^\star)^\top, \boldsymbol{0}^\top]^\top$ is globally asymptotically stable. 

When $\beta=1$ or $\gamma =1$, one has
\begin{equation}
    \dot{V} = \left\{
    \begin{aligned}
         &-\|z\|^{\alpha}(1-\beta) \nabla f_{\theta}^\top  \nabla f_{\theta}, \  & \gamma=1,\\
         &-\|z\|^{\alpha} \tfrac{\beta(1-\gamma)}{\gamma}v^\top  v, \ & \beta=1,
    \end{aligned}
    \right.
\end{equation}
 where $\dot{V}$ is only negative semi-definite, we apply LaSalle's Invariance Principle. Let $\mathcal{S} = \{(\theta, v) \mid \dot{V} = 0\}$ be the set where the derivative vanishes.
For the case $\gamma=1$: $\dot{V} = 0 \implies \nabla f(\theta) = \boldsymbol{0}$. Since $\theta^\star$ is the unique stationary point, this implies $\theta = \theta^\star$. For a trajectory to remain in $\mathcal{S}$, its derivative must satisfy $\dot{\theta} = \boldsymbol{0}$. Substituting $\theta = \theta^\star$ into the $\theta$-dynamics: $\dot{\theta} = \|z\|^\alpha (\beta v) = \boldsymbol{0} \implies v = \boldsymbol{0}$. Thus, the largest invariant set in $\mathcal{S}$ is the equilibrium point $[(\theta^\star)^\top,\boldsymbol{0}^\top]^\top$.

For the  case $\beta=1$: $\dot{V} = 0 \implies v = \boldsymbol{0}$. For a trajectory to stay in $\mathcal{S}$, we require $\dot{v} = \boldsymbol{0}$. Substituting $v=\boldsymbol{0}$ into the $v$-dynamics:
$
\dot{v} = -\kappa \gamma \|z\|^\alpha \nabla f(\theta) = \boldsymbol{0} \implies \nabla f(\theta) = \boldsymbol{0}
$. Since $\nabla f(\theta^\star) = \boldsymbol{0}$ uniquely, the largest invariant set is $[(\theta^\star)^\top,\boldsymbol{0}^\top]^\top$. According to Lasalle's invariance principle (see, e.g., \cite{khalil2002:book}), the $[(\theta^\star)^\top,\boldsymbol{0}^\top]^\top$ is globally asymptotically stable.
\end{proof}

The state-dependent scaling factor $\|z\|^{\alpha}$ only modifies the magnitude of the vector field without altering its direction. 
Consequently, the global asymptotic stability of the corresponding unscaled gradient--momentum dynamics is preserved under such scaling. 
We now investigate how this scaling mechanism, combined with the gradient-dominance property, leads to finite-time stability.

\begin{theorem}\label{thm:main}
    Let $f: \mathbb{R}^n \to \mathbb{R}$ satisfy Assumption \ref{assump:smooth} and be $\mu$-gradient dominated of order $p\in(1,4]$. For  $-1\le \alpha <\min\left\{\tfrac{2(2-p)}{p},0\right\}$, the equilibrium $ [(\theta^\star)^\top, \boldsymbol{0}^\top]^\top$ of the system \eqref{eq:dyn_HomM} is globally finite-time stable if any of the following conditions holds:
\begin{enumerate}
\item $\beta \in (0,1)$ and $\gamma \in (0,1)$;
\item $\beta = 1, \gamma \in (0,1)$, and the Hessian satisfies $\nabla^2 f(\theta) \succ 0, \forall \theta \in \mathbb{R}^n$;
\item $\gamma = 1, \beta \in (0,1)$, and the Hessian satisfies $\nabla^2 f(\theta) \succeq m I_n$ for some $m > 0, \forall \theta \in \mathbb{R}^n$.
    \end{enumerate}
\end{theorem}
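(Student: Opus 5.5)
The plan is to build a Lyapunov function $W$ that satisfies the differential inequality \eqref{eq:ft_condition} of Theorem~\ref{thm:finite_time_stability} on a sublevel set near the equilibrium, and then globalize using Lemma~\ref{lem:asy_stability}. Write $g:=\nabla f(\theta)$ and $H:=\nabla^2 f(\theta)$, so that $\dot g = H\dot\theta$.

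\emph{Globalization.} Lemma~\ref{lem:asy_stability} gives global asymptotic stability for $\beta\gamma<1$. Hence every trajectory enters any prescribed sublevel set $\{W\le \delta\}$ in finite time. It therefore suffices to prove $\dot W\le -cW^a$ with some $a\in(0,1)$ on such a set, where $\|g\|$, $\|v\|$ and $f-f^\star$ are all bounded by a small constant. Two side remarks:
\begin{itemize}
\item The condition $\alpha\ge -1$ makes $\|z\|^\alpha z$ continuous at the origin, so solutions exist; uniqueness away from the origin follows from Assumption~\ref{assump:smooth}.
\item Lyapunov stability is inherited from the asymptotic stability.
\end{itemize}

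\emph{Case 1.} Take $W=V$ from \eqref{eq:V}. The computation in the proof of Lemma~\ref{lem:asy_stability} gives
\[
\dot V\le -c_1\|z\|^{\alpha}\bigl(\|g\|^2+\|v\|^2\bigr)=-c_1\|z\|^{\alpha+2},
\]
with $c_1=\min\{1-\beta,\beta(1-\gamma)/\gamma\}$. Next I upper bound $V$ by powers of $\|z\|$. Gradient domination \eqref{eq:grad_domination} gives $f-f^\star\le C\|g\|^{p/(p-1)}$, hence $V\le C(\|g\|^{p/(p-1)}+\|v\|^2)$. I then raise to a power $a$ and apply Lemma~\ref{lem:local_power_bound}, or simply use that on a small ball larger exponents are dominated by smaller ones. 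This gives $V^a\le C'\|z\|^{\alpha+2}$, provided
\[
a\,\tfrac{p}{p-1}\ge \alpha+2 \quad\text{and}\quad 2a\ge \alpha+2 .
\]
Choosing $a=(\alpha+2)/\min\{2,\tfrac{p}{p-1}\}$ yields $a<1$ exactly when $\alpha<0$ and $\alpha<\tfrac{p}{p-1}-2$. My intention is to verify that the stated range $\alpha<\min\{2(2-p)/p,0\}$ with $p\le4$ implies this, or is what one obtains after bounding $f-f^\star$ through the Lipschitz estimate $\|g\|^2\le 2L(f-f^\star)$ in the reverse direction. Reconciling the exponent bookkeeping with the paper's exact threshold is where I expect care to be needed. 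The bound $a>0$ follows from $\alpha\ge-1$.

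\emph{Cases 2 and 3.} Here $\dot V$ misses one of the two quadratic terms, so I add a cross term $W=V+\varepsilon s\, g^\top v$ with small $\varepsilon>0$ and a sign $s=\pm1$. Equivalence $W\asymp V$ near the origin holds for small $\varepsilon$, because $|g^\top v|\le\tfrac12(\|g\|^2+\|v\|^2)$ and $\|g\|^2\le 2L(f-f^\star)$.

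For $\beta=1$, take $s=+1$. Then
\[
\frac{d}{dt}(g^\top v)=\|z\|^\alpha\bigl(v^\top Hv-\kappa\gamma\|g\|^2-\kappa(1-\gamma)g^\top v\bigr).
\]
The term $-\varepsilon\kappa\gamma\|g\|^2$ supplies the missing gradient dissipation. The term $\varepsilon v^\top Hv\le\varepsilon L\|v\|^2$ is absorbed by $-\tfrac{1-\gamma}{\gamma}\|v\|^2$, and the cross term is handled by Young's inequality. Positivity of $H$ keeps the estimate clean.

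For $\gamma=1$, the missing term is $\|v\|^2$, and it comes from $\beta v^\top Hv\ge\beta m\|v\|^2$. The sign $s$ must therefore be chosen so that this term enters $\dot W$ negatively, i.e.\ $s=-1$. This produces an extra $+\varepsilon\kappa\|z\|^\alpha\|g\|^2$, which is absorbed by $-(1-\beta)\|g\|^2$ once $\varepsilon\kappa<(1-\beta)/2$. The cross term $\varepsilon(1-\beta)g^\top Hv$ is bounded by $\varepsilon(1-\beta)L\|g\|\|v\|$ and split by Young's inequality.

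In both cases this yields $\dot W\le -c\|z\|^{\alpha+2}$, and the exponent argument of Case 1 then finishes the proof. The main obstacle I anticipate is choosing $\varepsilon$ and the Young weights uniformly so that all indefinite terms are absorbed while $W$ stays positive definite and comparable to $V$.
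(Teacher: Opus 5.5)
Your argument is correct. It shares the paper's skeleton: the energy $V$ when $\beta,\gamma<1$, a perturbation $V\pm\varepsilon\,g^\top v$ in the degenerate cases, a local inequality $\dot W\le -cW^a$, and globalization via Lemma~\ref{lem:asy_stability}. It departs from the paper at two points, and at both your version is the better one.

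\emph{Exponent step.} The paper works as follows.
\begin{enumerate}
\item It factors the dissipation as $\|z\|^\alpha\cdot(\|g\|^2+\|v\|^2)$.
\item It bounds $\|z\|^\alpha\gtrsim V^{\alpha/2}$ using the smoothness bound $V\gtrsim\|z\|^2$.
\item It bounds $\|g\|^2\gtrsim(f-f^\star)^{2\eta}$ with $\eta=\tfrac{p-1}{p}$, using Lemma~\ref{lem:local_power_bound} and a split at $p=2$.
\end{enumerate}
This yields the exponent $\tfrac{\alpha}{2}+2\eta$ and hence the threshold $\alpha<\tfrac{2(2-p)}{p}$.

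You keep $\|z\|^{\alpha+2}$ intact, which is legitimate since $\alpha+2>0$. You then use only the gradient-dominance bound $V\lesssim\|z\|^{p/(p-1)}$ on a small ball. For $p\ge 2$ this gives the exponent $(\alpha+2)\eta$, which is $\le\tfrac{\alpha}{2}+2\eta$ because $\alpha<0$ and $\eta\ge\tfrac{1}{2}$.

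The reconciliation you left open is immediate. For $p\ge2$, $\tfrac{2}{p}\ge\tfrac{1}{p-1}$ and $2-p\le 0$ give $\tfrac{2(2-p)}{p}\le\tfrac{2-p}{p-1}$. For $p\le 2$ both conditions reduce to $\alpha<0$. So the stated range lies inside yours, and your route covers more: for example $\alpha\in[-1,-\tfrac{2}{3})$ at $p=4$, where the paper's range is empty.

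\emph{Degenerate cases.} The paper uses $\tilde V=V-\epsilon v^\top g$ in both cases, but its formula for $\dot{\tilde V}$ has the wrong sign on the term coming from $\dot v$. Correctly, $-\epsilon\tfrac{d}{dt}(v^\top g)$ contributes $+\epsilon\kappa\|z\|^\alpha\bigl(\gamma\|g\|^2+(1-\gamma)v^\top g\bigr)$. So for $\beta=1$ one gets $\dot{\tilde V}>0$ at $v=0$, $g\neq 0$, and the paper's choice of sign fails there.

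Your choice $s=+1$ is the one that works. It puts $v^\top Hv$ in with the unfavorable sign, and you absorb it via $\|H\|\le L$. Consequently the hypothesis $\nabla^2 f\succ 0$ plays no role in your dissipation estimate, and your remark that positivity ``keeps the estimate clean'' is beside the point.

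For $\gamma=1$ your sign agrees with the paper's. Your bookkeeping, in which $+\varepsilon\kappa\|g\|^2$ must be absorbed by $(1-\beta)\|g\|^2$, is the correct form of the paper's $W_2$; its entry $1-\beta+\epsilon\kappa$ carries the same slip.

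One minor correction to a side remark: $\|z\|^\alpha z$ is continuous at the origin only for $\alpha>-1$. At $\alpha=-1$ it is bounded but discontinuous, so existence requires a Filippov-type solution concept. The paper is silent on this as well.
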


\begin{proof}
We first show the finite-time stability for $\beta\in (0,1)$ and $\gamma\in (0,1)$. Given that $f$ is $L$-smooth (with $L$-Lipschitz gradient), the following standard bound holds:
\begin{equation}\label{eq:fgap-grad-bound}
\tfrac{1}{2L}\|\nabla f(\theta)\|^2\le f(\theta)-f^\star.
\end{equation}

Then, the lower bound of $V$ has
\begin{equation}
\begin{aligned}
         V&\ge\tfrac{1}{2L}\|\nabla f_\theta\|^2 +    \tfrac{\beta}{2\gamma\kappa}\|v\|^2
         \ge  c_1\|z\|^2.
\end{aligned}
 \label{neq:v_lower}
\end{equation}
where $c_1:=\min\left\{\tfrac{1}{2L}, \tfrac{\beta}{2\gamma\kappa}\right\}$.
By the gradient dominance, we have
\[
\|\nabla f_\theta\|\ge  b(f_\theta-f^\star)^{\eta},
\]
where $\eta:=\tfrac{p-1}{p}\in (0,1)$, $b:=\left[\tfrac{1}{\eta}\mu^{1/(p-1)}\right]^{\eta}$. Then, we have the upper bounded of $V$:
\begin{equation}\label{eq:v_upper}
    V\le \eta \mu^{\tfrac{1}{1-p}} \|\nabla f_\theta\|^{\tfrac{1}{\eta}} +    \tfrac{\beta}{2\gamma\kappa}\|v\|^2\le c_2(\|\nabla f_\theta\|^{\tfrac{1}{\eta}}+\|v\|^2),
\end{equation}
where $c_2:=\max\left\{\eta \mu^{1/(1-p)}, \tfrac{\beta}{2\gamma\kappa} \right\}$.

On the other hand, the derivative has
\begin{equation*}
        \begin{aligned}
            \dot{V} &\le- (1-\beta)\|z\|^{\alpha} \|\nabla f_\theta\|^2 -\tfrac{\beta(1-\gamma)}{\gamma}\|z\|^{\alpha}v^\top v\\
            &\le -(1-\beta)b \|z\|^{\alpha}  (f-f^\star)^{2\eta}-\tfrac{\beta(1-\gamma)}{\gamma}\|z\|^{\alpha}\|v\|^2,\\
& \le -\tilde{b}\|z\|^{\alpha}\left(
(f-f^\star)^{2\eta}+\|v\|^2
\right),
        \end{aligned}
\end{equation*}
where $\tilde{b}:=\min\left\{(1-\beta)b,\tfrac{\beta(1-\gamma)}{\gamma}\right\}$.

\textbf{Case I:}   $p\in [2,4)$. We have $2\eta\ge 1$. According to Lemma \ref{lem:local_power_bound}, for a  neighborhood of the stationary point $\Omega:=\{z\in \mathbb{R}^{2n}:\|z\|\le \delta, \ \delta>0\}$, there exists a positive constant $C_{\delta}$, such that the following inequality holds:
\begin{equation}\label{eq:estimate_1}
    C_{\delta}((f-f^\star)^{2\eta}+\|v\|^2) \ge \left((f-f^\star)+\|v\|^2\right)^{2\eta}.
\end{equation}
Thus, we obtain for all $z \in \Omega$: 
\begin{equation}\label{eq:d_V_p_1}
    \dot{V}\le -\tfrac{\tilde{b}}{C_\delta}\|z\|^\alpha(f-f^\star + \|v\|^2)^{2\eta}. 
\end{equation}

\textbf{Case II:}  $p\in (1,2]$. We have $2\eta\in (0,1]$. 
Therefore, for a neighborhood of the equilibrium, $\tilde{\Omega}
:=
\{z\in\mathbb{R}^{2n} \mid f-f^\star\le 1\}$, it holds that, 
\begin{equation}\label{eq:estimate_2}
    (f-f^\star)^{2\eta} \ge f-f^\star.
\end{equation}
Thus, according to \eqref{neq:v_lower} and \eqref{eq:v_upper}, for any $z\in \tilde{\Omega}$,  $V $ has
\[
\dot{V}\le -\tilde{b}\|z\|^{\alpha}\left(
(f-f^\star)+\|v\|^2
\right).
\]
Besides, from the lower bound of $V$ in \eqref{neq:v_lower}, for $\alpha<0$, one has $V^{\tfrac{\alpha}{2}}\le(\sqrt{c_1}\|z\|)^\alpha$.
Combining the above estimates, there  exists a neighborhood of the stationary point $\Omega\cap \tilde{\Omega}$ such that
\begin{equation}
    \dot{V}\le \left\{
    \begin{aligned}
    &-\tfrac{\tilde{b}}{C_\delta}c_1^{-\alpha/2}V^{\tfrac{\alpha}{2}+2\eta} , \ &  p\in [2,4),\\
    &-\tilde{b}c_1^{-\alpha/2}V^{\tfrac{\alpha}{2}+1}, \ & p\in (1,2].
    \end{aligned}\right.
\end{equation}
Since  $-1\le \alpha <\min\left\{\tfrac{2(2-p)}{p},0\right\}$, it has $\tfrac{\alpha}{2}+2\eta\in (0,1)$ and $\tfrac{\alpha}{2}+1\in (0,1)$, by Theorem~\ref{thm:finite_time_stability},  for both cases, the \eqref{eq:dyn_HomM}  locally finite-time converge to the stationary point.
On the other hand, by global asymptotic stability in Lemma~\ref{lem:asy_stability}, every trajectory enters $\Omega\cap \tilde{\Omega}$ in finite time. Therefore, the equilibrium is globally finite-time stable.

Now, we consider the parameters $\beta=1$ or $\gamma=1$, in this case, $\dot{V}$ becomes negative semi-definite (weakly dissipative). 
To establish strict dissipativity and thus finite-time stability, for the case of $\beta=1$ or $\gamma=1$, we consider a new Lyapunov candidate
\[
\tilde{V} = V - \epsilon v^\top\nabla f_\theta.
\]
We firstly show that for sufficiently small $\epsilon>0$,  $\tilde{V}$ is well-posed.
By \eqref{neq:v_lower}, we have the lower bound of $\tilde{V}$
\[
\tilde{V}\ge \frac{1}{2}z^\top W z, \quad W = \left[\begin{smallmatrix}
    1/L I_n & -\epsilon I_n\\
    -\epsilon I_n & \beta/(\gamma\kappa) I_n
\end{smallmatrix}\right].
\]
By the Schur complement condition, for sufficiently small $\epsilon < \sqrt{\tfrac{\beta}{\gamma\kappa L}}$,  $W$ is positive definite.

The derivative of $\tilde{V}$ has
\begin{equation}
    \begin{aligned}
        \dot{\tilde{V}} 
        =& \  \dot{V} - \epsilon\kappa \|z\|^\alpha( \gamma \nabla f_\theta + (1-\gamma) v)^\top \nabla f_\theta\\ 
        &- \epsilon\|z\|^\alpha v^\top \nabla^2 f_\theta (-(1-\beta)\nabla f_\theta +\beta v)
    \end{aligned}
\end{equation}
For $\beta=1$, it has
\begin{equation}
    \dot{\tilde{V}} = \dot{V} \!-\!\epsilon \|z\|^\alpha\left[\kappa( \gamma \nabla f_\theta + (1-\gamma) v)^\top \nabla f_\theta -  v^\top\nabla^2 f_\theta  v\right].
\end{equation}

For $\beta=1$ and $\gamma\neq 1$, since $\nabla^2 f_\theta$ is positive definite, we have
\begin{equation}
\!\!\!\dot{\tilde{V}}\!\le -\epsilon\kappa\|z\|^\alpha\left[\tfrac{\beta(1-\gamma)\|v\|^2}{\epsilon\kappa\gamma}\!+\!\gamma\|\nabla f_\theta\|^2 \!+\!(1\!-\!\gamma)v^\top \nabla f_\theta\right]\!,
\end{equation}
equivalent to
\begin{equation}
    \dot{\tilde{V}}\le -\|z\|^\alpha z^\top W_1 z, \ W_1\!=\!\left[\begin{smallmatrix}
    \epsilon\kappa \gamma I_n& \epsilon\kappa(1-\gamma)/2 I_n\\ \epsilon\kappa(1-\gamma)/2 I_n & \tfrac{\beta(1-\gamma)}{\gamma} I_n
\end{smallmatrix}\right].
\end{equation}
By Schur complemry, for sufficiently small $\epsilon\le\frac{\beta}{\kappa(1-\gamma)} $, $W_1$ is positive definite. 
Then, we have 
\begin{equation}
    \begin{aligned}
        \dot{V}\le -\lambda_{\min}(W_1)\|z\|^\alpha(\|\nabla f_\theta\|^2+\|v\|^2)
    \end{aligned}
\end{equation}
Using the same estimation in 
\eqref{eq:estimate_1}, \eqref{eq:estimate_2}, and $\tilde{V}\ge \tfrac{{\lambda_{\min}(W)}}{2}\|z\|^2$, we obtain the following local inequality:
\begin{equation}
    \!\!\!\!\dot{\tilde{V}}\!\le\! \left\{
    \begin{aligned}
    &\!-\tfrac{\lambda_{\min}(W_1)}{C_\delta}\left(\tfrac{2}{\lambda_{\min}(W)}\right)^{\alpha/2}\tilde{V}^{\tfrac{\alpha}{2}+2\eta} ,  &  p\in [2,4),\\
    &\!-\lambda_{\min}(W_1)\left(\tfrac{2}{\lambda_{\min}(W)}\right)^{\alpha/2}\tilde{V}^{\tfrac{\alpha}{2}+1},  & p\in (1,2].
    \end{aligned}\right.
\end{equation}
Local finite-time stability then follows directly from Theorem~\ref{thm:finite_time_stability}. Since global asymptotic stability has already been established, we conclude that the equilibrium is globally finite-time stable.

For $\gamma=1$ and $\beta\neq 1$, we have
\begin{equation}
\begin{aligned}
\dot{\tilde{V}}=& \ \dot{V}\! -\!\|z\|^\alpha(\epsilon\kappa  \|\nabla f_\theta\|^{2}- \epsilon v^\top \nabla^2 f_\theta (-(1\!-\!\beta)\nabla f_\theta +\beta v))\\
       \le& -\|z\|^\alpha\left[(1-\beta+\epsilon \kappa)\|\nabla f_\theta\|^2+ \epsilon\beta v^\top \nabla^2 f_\theta v \right]\\
    &-\|z\|^\alpha\epsilon(1-\beta)v^\top\nabla^2 f_\theta \nabla f_\theta.
\end{aligned}
\end{equation}
The lower bound on the Hessian implies $v^\top \nabla^2 f_\theta v\ge m  \|v\|^2$.
Moreover, since the Hessian is bounded ( $\|\nabla^2 f_\theta\| \le L$), one has $
|v^\top \nabla^2 f_\theta \nabla f_\theta| \le \|v\| \|\nabla^2 f_\theta \nabla f_\theta\| \le L \|v\| \|\nabla f_\theta\|
$.
Now, we apply Young's Inequality with a parameter $\delta > 0$. The standard form $ab \le \frac{1}{2\sigma}a^2 + \frac{\sigma}{2}b^2$ gives us:
$$
L \|v\| \|\nabla f_\theta\| \le \tfrac{L}{2\sigma}\|v\|^2 + \tfrac{L\sigma}{2}\|\nabla f_\theta\|^2
$$
Thus, we have
\begin{equation}
    \dot{\tilde{V}}\le -\|z\|^\alpha z^\top W_2 z,
\end{equation}
where $W_2\!=\!\left[\begin{smallmatrix}
    \left(1\!-\!\beta +\epsilon \kappa - \tfrac{\epsilon L(1-\beta)}{2\sigma}\right)I_n& \boldsymbol{0}\\ \boldsymbol{0} & \left(\epsilon\beta m\!-\! \tfrac{L(1-\beta)\epsilon\sigma}{2}\right) I_n
\end{smallmatrix}\right]$.

By selecting  $\sigma < \frac{L(1-\beta)}{2\beta m}$ there exists a sufficiently small $\epsilon$, such that $W_2$ is positive definite. 
\begin{equation}\label{eq:W_2}
    \dot{\tilde{V}}\le -\lambda_{\min}(W_2)\|z\|^\alpha (\|\nabla f_\theta\|^2+\|v\|^2).
\end{equation}
Applying the same arguments for \eqref{eq:W_2} as in the case $\beta = 1$, $\gamma \neq 1$,  we obtain finite-time stability. 
This completes the proof. 
\end{proof}

The parameter $p$ characterizes the local growth behavior of the objective 
function in a neighborhood of the stationary point. 
When $p \in (1,2)$, the gradient domination condition implies that 
$\|\nabla f(\theta)\|$ remains relatively large compared to 
$f(\theta)-f^\star$ near $\theta^\star$, which strengthens the descent effect 
and facilitates faster convergence.  In contrast, when $p \in [2,4)$, the gradient vanishes more rapidly 
as $\theta \to \theta^\star$, reflecting a flatter local geometry 
and yielding weaker descent dynamics.

From Theorem \ref{thm:main}, we would like to emphasize the following observations.

\begin{itemize}
    \item 
     First, the role of the scaling parameter $\alpha$ depends critically on the dominance order $p$. 
For $p \in [2,4)$, where the flatness of the landscape slows down convergence, a negative scaling exponent $\alpha\in [-1,2(2-p)/p)$ is necessary to compensate for the vanishing gradient and to ensure finite-time convergence with an acceleration effect. 
On the other hand, when $p \in (1,2)$, finite-time stability can be achieved with any negative $\alpha$ due to the intrinsic sharpness of the objective. 
Nevertheless, selecting a larger $|\alpha|$ further accelerates the convergence speed near the optimal.

\item Second, the stability mechanism is structure-dependent.  In the critical configurations $\beta=1$ or $\gamma=1$, the Lyapunov derivative reduces to a weak dissipation condition $\dot V \le 0$. Strict finite-time stability can be restored under additional Hessian conditions that enforce sufficient energy dissipation.
\end{itemize}

While Theorem \ref{thm:main} establishes the finite-time convergence of the objective value and the momentum, it is often desirable to characterize the convergence rate in the parameter space. 
For the specific dominance order $p=2$, the system admits a global finite-time Lyapunov function, which allows for a global time estimate to reach the optimal set. Without loss of generality, we assume that the momentum state is initialized at $v(0)=\mathbf{0}$.
\begin{corollary}
   Suppose $f:\mathbb{R}^n \to \mathbb{R}$ is $\mu$-strongly convex and satisfies Assumption \ref{assump:smooth}. Let the momentum state be initialized at $v(0)=\mathbf{0}$. For $\alpha \in (-1,0)$ and parameters $\beta, \gamma \in (0,1]$ satisfying $\beta\gamma < 1$, the trajectories of the system \eqref{eq:dyn_HomM} converge to the optimal solution $\theta^\star$ in finite time. Specifically, there exists a constant $C > 0$ such that:
\begin{equation}
\|\theta(t) - \theta^\star\| \le C \big(T_s - t\big)^{-1/\alpha}, \quad \forall t \in [0, T_s),
\end{equation}
where $T_s := \tfrac{2(f(0)- f^\star)^{-\alpha/2}}{-\alpha \rho}$ for some constant $\rho>0$. Furthermore, for all $t \ge T_s$, $\|\theta(t) - \theta^\star\| = 0$.
\end{corollary}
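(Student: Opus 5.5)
Our plan is to build a single Lyapunov function that works globally, not just near the equilibrium, and integrate the resulting differential inequality explicitly. We take $V$ from \eqref{eq:V} when $\beta,\gamma\in(0,1)$, and $\tilde V=V-\epsilon v^\top\nabla f_\theta$ when $\beta=1$ or $\gamma=1$. Strong convexity with constant $\mu$ gives $\nabla^2 f\succeq \mu I_n$, so the Hessian hypotheses of Theorem~\ref{thm:main} (cases 2 and 3, with $m=\mu$) hold automatically. Strong convexity also gives the PL inequality, which is gradient dominance with $p=2$, i.e.\ $2\eta=1$. Combined with Assumption~\ref{assump:smooth}, we get the two-sided global bound $\tfrac{1}{2L}\|\nabla f_\theta\|^2\le f-f^\star\le \tfrac{1}{2\mu}\|\nabla f_\theta\|^2$. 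Hence the Lyapunov function, call it $\mathcal V$, satisfies $c_1\|z\|^2\le \mathcal V\le c_2\|z\|^2$ on all of $\mathbb{R}^{2n}$. This is the key point: for $p=2$ neither Lemma~\ref{lem:local_power_bound} nor the restriction $f-f^\star\le 1$ is needed.

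The first step reuses the derivative estimates from the proof of Theorem~\ref{thm:main}: $\dot{\mathcal V}\le -\lambda\|z\|^{\alpha}\|z\|^2$ with $\lambda=\tilde b$, $\lambda_{\min}(W_1)$ or $\lambda_{\min}(W_2)$ according to the case. These matrix estimates are quadratic in $z$, so they are already global. Using $\|z\|^2\ge \mathcal V/c_2$ and, since $\alpha<0$, $\|z\|^\alpha\ge (\mathcal V/c_1)^{\alpha/2}$ is the wrong direction, we instead use $\|z\|^{\alpha+2}\ge (\mathcal V/c_2)^{(\alpha+2)/2}$. This is valid because $\alpha+2>0$. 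It yields the global inequality
\[
\dot{\mathcal V}\le -\rho\,\mathcal V^{1+\alpha/2},\qquad 1+\tfrac{\alpha}{2}\in(\tfrac12,1),
\]
for some $\rho>0$.

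The second step integrates this by comparison. We obtain $\mathcal V(t)^{-\alpha/2}\le \mathcal V(0)^{-\alpha/2}-\tfrac{-\alpha\rho}{2}t$, so $\mathcal V(t)\le \big(\tfrac{-\alpha\rho}{2}\big)^{-2/\alpha}(T_s-t)^{-2/\alpha}$ for $t<T_s$, and $\mathcal V\equiv0$ afterwards. Since $v(0)=\mathbf 0$, we have $\mathcal V(0)=f(\theta(0))-f^\star$ (the cross term in $\tilde V$ vanishes), which gives exactly the stated $T_s$. For $t\ge T_s$ the solution stays at the equilibrium, by uniqueness of solutions off the origin together with Lyapunov stability.

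The third step transfers the bound to parameter space. Strong convexity gives $\tfrac{\mu}{2}\|\theta-\theta^\star\|^2\le f-f^\star$, and $f-f^\star\le \mathcal V/c$ for some $c>0$: this is immediate for $V$, and for $\tilde V$ it follows from $W\succ0$ after adjusting constants using $f-f^\star\le\|\nabla f\|^2/(2\mu)$. Therefore $\|\theta-\theta^\star\|\le C\,\mathcal V^{1/2}\le C(T_s-t)^{-1/\alpha}$, as claimed.

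The main obstacle is making sure the estimates are truly global and that $\rho$ is uniform. The constants $c_1$, $c_2$ and $\lambda$ must be independent of the trajectory, which holds because all bounds are quadratic. We also have to handle well-posedness of the vector field at $z=0$ when $\alpha<0$. Here the field is $\|z\|^{\alpha}\cdot O(\|z\|)$ with $\alpha>-1$, so it is continuous and vanishes at the origin, and we argue as in Theorem~\ref{thm:finite_time_stability}. Finally, the choice of $\epsilon$ in cases 2 and 3 must be compatible both with $W\succ0$ and with the derivative estimate; we take $\epsilon$ below both thresholds identified in the proof of Theorem~\ref{thm:main}.
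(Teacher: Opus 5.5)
Your overall route is the paper's. You take $V$ or $\tilde V$, obtain $\dot{\mathcal{V}}\le-\rho\,\mathcal{V}^{1+\alpha/2}$ from the dissipation estimates in the proof of Theorem~\ref{thm:main}, integrate, and pass to $\|\theta-\theta^\star\|$ via strong convexity, using $\mathcal{V}(0)=f(\theta(0))-f^\star$ because $v(0)=\mathbf{0}$. Your explicit two-sided bound $\tfrac{1}{2L}\|\nabla f_\theta\|^2\le f-f^\star\le\tfrac{1}{2\mu}\|\nabla f_\theta\|^2$ gives $c_1\|z\|^2\le\mathcal{V}\le c_2\|z\|^2$ on all of $\mathbb{R}^{2n}$. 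That justifies globality more cleanly than the paper does: the paper simply appeals to the theorem, whose differential inequalities are derived only on the neighborhood $\Omega\cap\tilde\Omega$.

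One step you import fails as written: the case $\beta=1$, $\gamma<1$ with $\tilde V=V-\epsilon v^\top\nabla f_\theta$. Differentiating directly gives
\[
\dot{\tilde V}=\|z\|^\alpha\Big[-\tfrac{1-\gamma}{\gamma}\|v\|^2+\epsilon\kappa\gamma\|\nabla f_\theta\|^2+\epsilon\kappa(1-\gamma)v^\top\nabla f_\theta-\epsilon\, v^\top\nabla^2 f_\theta\, v\Big].
\]
So $\dot{\tilde V}>0$ wherever $v=\mathbf{0}$ and $\nabla f_\theta\neq\mathbf{0}$. In particular this happens at $t=0$ on every nontrivial trajectory in the corollary.

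When $\beta=1$, $\dot V$ contains no $\|\nabla f_\theta\|^2$ damping to absorb $+\epsilon\kappa\gamma\|\nabla f_\theta\|^2$. Positivity of the Hessian does not help, since it only controls $\|v\|^2$. Hence the bound $\dot{\tilde V}\le-\lambda_{\min}(W_1)\|z\|^{\alpha+2}$ you rely on is false. The displayed derivative of $\tilde V$ in the proof of Theorem~\ref{thm:main} has the wrong sign on the $\kappa$-term, and the paper's proof of this corollary inherits the error.

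The repair is to flip the cross term in this case, taking $\tilde V=V+\epsilon v^\top\nabla f_\theta$. The bracket then becomes $-\tfrac{1-\gamma}{\gamma}\|v\|^2-\epsilon\kappa\gamma\|\nabla f_\theta\|^2-\epsilon\kappa(1-\gamma)v^\top\nabla f_\theta+\epsilon\, v^\top\nabla^2 f_\theta v$. Using only $v^\top\nabla^2 f_\theta v\le L\|v\|^2$, it is bounded by $-\lambda\|z\|^2$ for some $\lambda>0$ once $\epsilon$ is small. The sandwich $c_1\|z\|^2\le\tilde V\le c_2\|z\|^2$ (valid for $\epsilon^2<\beta/(\gamma\kappa L)$) and the identity $\tilde V(0)=f(\theta(0))-f^\star$ do not depend on the sign of $\epsilon$. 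The rest of your argument therefore goes through unchanged.

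Your minus sign is the right choice for $\gamma=1$. There the useful term is $-\epsilon\beta\, v^\top\nabla^2 f_\theta v\le-\epsilon\beta\mu\|v\|^2$, and $+\epsilon\kappa\|\nabla f_\theta\|^2$ is absorbed by $-(1-\beta)\|\nabla f_\theta\|^2$.
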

\begin{proof}
    For $v(0)=\boldsymbol{0}$, we have $\tilde{V}(0) =V(0) = f(\theta(0))-f(\theta^\star)$. By lipschitz condition, we have $\tilde{V}(0) =V(0)\le \tfrac{L}{2} \|\theta(0)-\theta^\star\|^2$.
    Since $V$ corresponds to the special case $\epsilon=0$ of $\tilde V$,  we restrict the analysis to $\tilde V$ without loss of generality.
    On the other hand, due to strong convexity, one has 
   \begin{equation}
    \tilde{V}\ge \tfrac{1}{2}\lambda_{\min}(W)\|\nabla f_\theta\|^2\ge \tfrac{\mu}{2}\lambda_{\min}(W)\|\theta(t)-\theta^\star\|^2.
   \end{equation}
    According to the proof of the Theorem \ref{thm:main}, there exists some constant $\rho>0$ such that
    \begin{equation}
        \dot{\tilde{V}}(t)\le -\rho \tilde{V}^{\tfrac{\alpha}{2}+1}.
    \end{equation}
    Since $\alpha\in(-1,0)$, we have $\frac{\alpha}{2}+1\in(0,1)$.
Solving this differential inequality yields $\tilde V(t)
\le
\left[
\tilde V^{-\alpha/2}(0)
+
\tfrac{\alpha\rho}{2}t
\right]^{-2/\alpha}$.

The right-hand side reaches zero at 
\[
T_s =
\tfrac{2 \tilde V(0)^{-\alpha/2}}{-\alpha \rho }
 = \tfrac{2(f(0)- f^\star)^{-\alpha/2}}{-\alpha \rho}
\le  \tfrac{2 (L\|\theta(0)- \theta^\star\|)^{-\alpha/2}}{-\alpha \rho}.
\]
 By the upper bound and lower bound of the Lyapunov function, we have
    \[
    \tfrac{\mu\lambda_{\min}(W)\|\theta(t)-\theta^\star\|^2}{2}\le \left[ \tfrac{-\alpha\rho}{2}(T_s-t) \right]^{-\frac{2}{\alpha}}.
    \]
    Identifying $C = \sqrt{\frac{2}{\mu}\lambda_{\min}(W)} (\frac{-\alpha \rho}{2})^{-1/\alpha}$, the proof is complete.
\end{proof}

The scaling parameter $\alpha$ provides a direct mechanism to tune both the decay behavior and the settling time.

\section{Numerical illustration}\label{sec:simulation}
We illustrate the theoretical results using two representative objective functions. We consider the Rosenbrock function
\begin{equation}
f(\theta_1,\theta_2)
=
100(\theta_2-\theta_1^2)^2
+
(1-\theta_1)^2,
\end{equation}
which is nonconvex and admits a unique global minimizer 
$(\theta_1^\star,\theta_2^\star)=(1,1)$. 
This example is used to evaluate the behavior of the proposed dynamics in a nonconvex setting. 
For this function, the gradient-dominance order is $p=2$. 
We demonstrate the effect of the scaling parameter $\alpha\in \{-0.25, -0.5, -0.75\}$, as well as different parameter structures: 1). $\beta=1$, $\gamma \in (0,1)$; 2). $\beta \in (0,1)$, $\gamma=1$; 3). $\beta, \gamma \in (0,1)$.
To examine the influence of the order of gradient dominance, we consider
\begin{equation}\label{eq:example_2}
f(\theta)=\tfrac{1}{p}\|\theta\|^p, 
\end{equation}
whose unique minimizer is $\theta^\star=0$. We select $p=1.5, 2, 3$ for our validation. According to Theorem~\ref{thm:main}, we choose $\alpha=-0.8$ and $\beta=\gamma=0.5$ to guarantee finite-time stability.
 
The numerical results are presented in Figs.~\ref{fig:1} and~\ref{fig:2}. 
From Fig.~\ref{fig:1}, for the Rosenbrock function, larger values of $|\alpha|$ result in a shorter settling time and a faster local convergence rate under the scaled gradient--momentum flow. 
Since the Hessian of the Rosenbrock function is not sign-definite over the entire domain, the strong convexity condition is not globally satisfied. 
As shown in the right subfigure of Fig.~\ref{fig:1}, the configurations with $\beta=1$ or $\gamma=1$ fail to achieve finite-time convergence, which is consistent with Theorem~\ref{thm:main}. 

Figure~\ref{fig:2} illustrates the behavior under different gradient-dominance orders for the objective function \eqref{eq:example_2}. In all cases, the trajectories converge to the optimal solution in finite time. 
Moreover, the order of gradient dominance directly influences the settling time: smaller values of $p$ lead to shorter settling times.

\begin{figure}[h]
    \centering
    \includegraphics[width=0.493\linewidth]{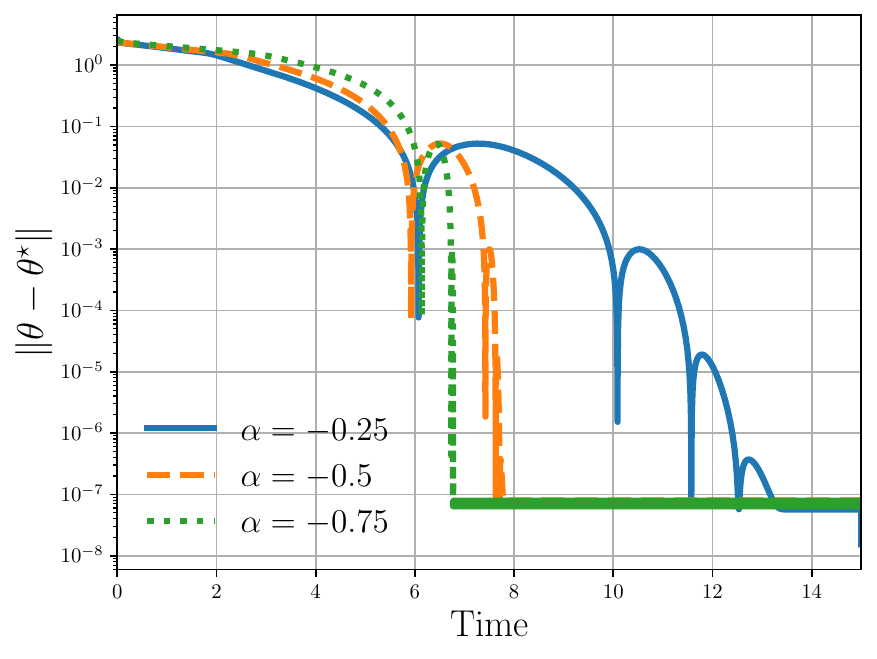}
\includegraphics[width=0.493\linewidth]{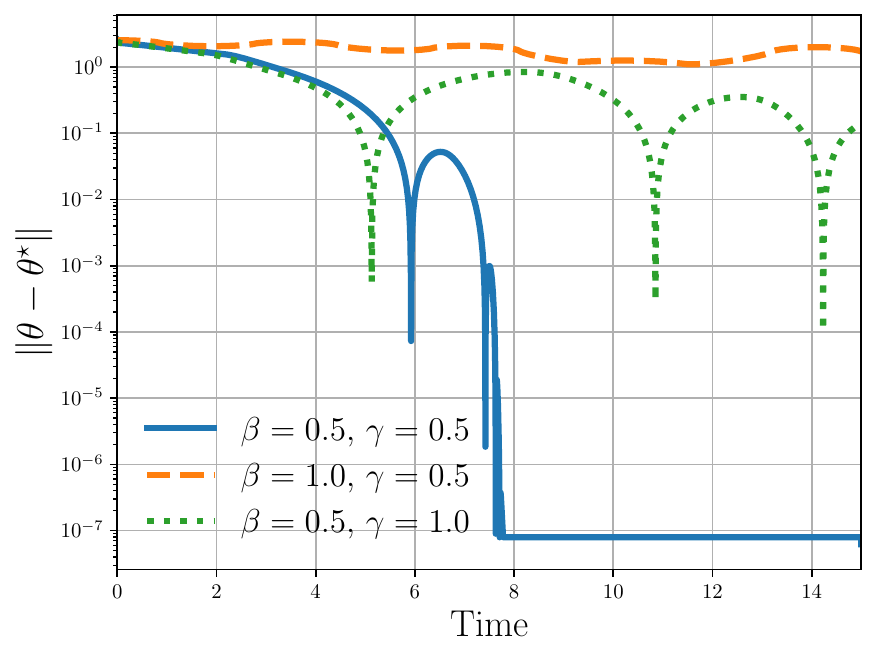}

    \caption{Time evolution of $\|\theta(t)-\theta^\star\|$ for the Rosenbrock function on a logarithmic scale.
Left: comparison of different $\alpha$ values with $\beta=\gamma=0.5$.
Right: comparison of different $(\beta,\gamma)$ pairs with $\alpha=-0.5$.}
    \label{fig:1}
\end{figure}

\begin{figure}[h]
    \centering
    \includegraphics[width=0.493\linewidth]{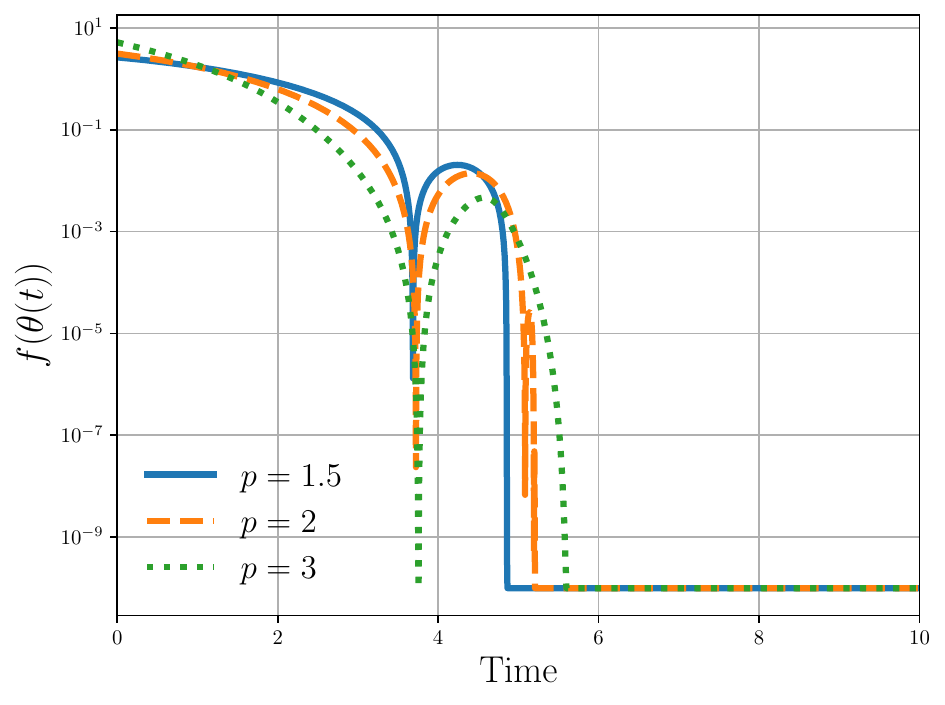}
\includegraphics[width=0.493\linewidth]{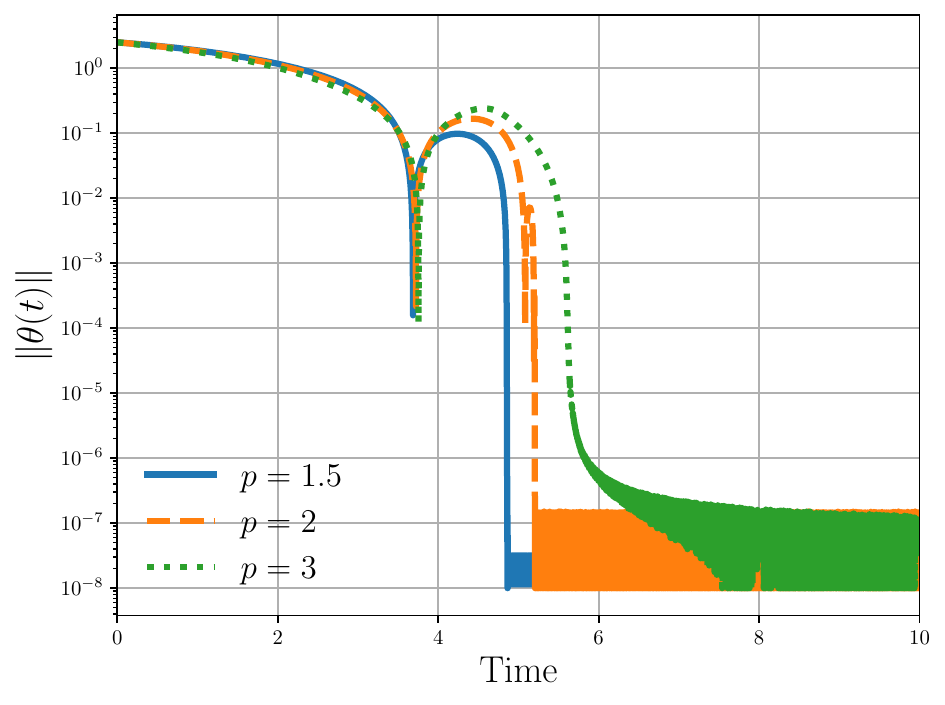}
    
    \caption{Time evolution of the objective value $f(\theta(t))$ (left) and the state norm $\|\theta(t)\|$ (right) for the function~\eqref{eq:example_2} on a logarithmic scale.}
    \label{fig:2}
\end{figure}

\section{Conclusion}
This paper proposed a scaled gradient–momentum flow that achieves global finite-time convergence for unconstrained optimization by extending classical momentum dynamics from asymptotic to finite-time stability through state-dependent scaling. We show that the effect of the scaling parameter $\alpha$ depends critically on the objective’s growth order $p$. For flatter landscapes ($p \in [2,4)$), a sufficiently negative scaling exponent is required to  ensure finite-time convergence. In contrast, for sharper objectives ($p \in (1,2)$), finite-time stability can be achieved for any negative $\alpha$. Moreover, finite-time stability is structure-dependent. In weakly dissipative configurations (e.g., $\beta = 1$ or $\gamma = 1$), strict finite-time convergence requires additional Hessian conditions. Future work will focus on the discretization of the proposed scaled gradient–momentum flow.

\bibliographystyle{IEEEtran}
\bibliography{Ref}

\end{document}